 \newtheorem{thm}{Theorem}[section]
 \newtheorem{cor}[thm]{Corollary}
 \newtheorem{lem}[thm]{Lemma}
 \newtheorem{prop}[thm]{Proposition}
 \theoremstyle{definition}
 \newtheorem{defn}[thm]{Definition}
 \theoremstyle{remark}
 \newtheorem{rem}[thm]{Remark}
 \numberwithin{equation}{section}
\def\opn#1#2{\def#1{\operatorname{#2}}}
\opn\depth{depth}
\opn\reg{reg}
\opn\height{height}
\opn\pd{pd}
\opn\Tor{Tor}
\opn\Ass{Ass}
\opn\multideg{multideg}
\def\NZQ{\mathbb}               
\def\CC{{\NZQ C}}
\def\opn#1#2{\def#1{\operatorname{#2}}}
\opn\depth{depth}
\opn\dim{dim}
\opn\link{link}
\opn\type{type}
\opn\reg{reg}
\opn\height{height}
\opn\pd{pd}
\opn\Tor{Tor}
\opn\Ass{Ass}
\opn\multideg{multideg}
\begin{document}

%
%
%
%
%
%
%
%
%

\title[Betti numbers of mixed product ideals]
 {Betti numbers of mixed product ideals}
\author{Giancarlo Rinaldo}
\address{Dipartimento di Matematica\br 
Universit\`a di Messina\br
Contrada Papardo, salita Sperone, 31\br
98166 Messina, Italy}
\email{rinaldo@dipmat.unime.it}
\subjclass{Primary 13P10; Secondary 13B25}

\keywords{Mixed product ideals, Betti numbers, Cohen-Macaulay rings}

\begin{abstract}
We compute the Betti numbers of the resolution of a special class of square-free monomial ideals, the ideals of mixed products.  Moreover when these ideals are Cohen-Macaulay we calculate their type.
\end{abstract}

\maketitle

\section{Introduction}
\label{1}

 The class of ideals of mixed products is a special class of square-free monomial ideals. They were first introduced by G.~Restuccia and R.~Villarreal (see \cite{RV} and \cite{Vi}), who studied the normality of such ideals.

In \cite{IR} C.~Ionescu and G.~Rinaldo studied the Castelnuovo-Mumford regularity, the depth and dimension of mixed product ideals and characterize when they are Cohen-Macaulay.

Let $S=K[\mathbf{x},\mathbf{y}]$ be a polynomial ring over a field $K$ in two disjoint
sets of variables $\mathbf{x}=(x_1,\dots,x_n)$, $\mathbf{y}=(y_1,\dots,y_m)$. The \textit{ideals of mixed products} are the ideals 
$$I_qJ_r+I_sJ_t,\ \ q,r,s,t\in{\bf N},\ \ q+r=s+t,$$
where $I_l$ (resp. $J_p$) is the ideal of $S$ generated by all the
square-free monomials of degree $l$ (resp. $p$) in the variables $\mathbf{x}$
(resp. $\mathbf{y}$). We set  $I_0=J_0=S$.
By symmetry, essentially there are  2 cases:
\begin{enumerate}
 \item[i)] $L=I_kJ_r+I_sJ_t,\ 0\leq k< s$;
 \item[ii)] $L=I_kJ_r,\ k\geq 1\ {\rm or}\ r\geq 1.$
\end{enumerate}

\noindent
Let $\mathbb{F}$ be the minimal free resolution of an ideal $I$:
\[\mathbb{F}:\  0\to F_p \to \cdots \to F_{p-1}\to\cdots\to F_0\to I\rightarrow 0\]

\noindent
and let $F_i=\mathop\oplus\limits_jS(-j)^{\beta_{ij}}.$

We want to calculate the Betti numbers, $\beta_{ij}$, when $I$ is a mixed product ideals.

To reach this goal we consider $S/I$ as a Stanley-Reisner ring (see \cite{BH}, Chapter 5), we calculate the Alexander duality of $I$ and we use the Hochster's formula (see \cite{BH}, Chapter 5.5 or \cite{MS}, Chapter 1.5).

In section \ref{sec:Alex} we calculate the dual of the mixed product ideal $I_q$ and $I_qJ_r$.

Hochster's formula give us a powerful tool to calculate Betti numbers of the resolution of a squarefree monomial ideal $I$. For this reason we have to calculate $I^*$ and the homology of the chain complex of the simplicial complex $\Delta^*$ related to $I^*$ (section \ref{sec:Betti}).

In the last section as an application we compute the type of Cohen-Macaulay mixed product ideals. 

\section{Alexander duality of mixed product ideals}\label{sec:Alex}
We recall the following (see \cite{MS}, Definition 1.35):
\begin{defn}
  Let $I=(\mathbf{x}^{\alpha_1},\ldots, \mathbf{x}^{\alpha_q})\subset K[\mathbf{x}]= K[x_1,\ldots,x_n]$ be a square-free
  monomial ideal, with $\alpha_i=(\alpha_{i_1},\ldots,\alpha_{i_n})\in \{0,1\}^n$. The \emph{Alexander dual}
  of $I$ is the ideal
  
  $$I^*=\mathop\bigcap\limits_{i=1}^q \mathfrak{m}_{\alpha_i},$$  
where $\mathfrak{m}_{\alpha_i}=(x_j:\alpha_{i_j}=1)$.
\end{defn}

\begin{prop}\label{theo:dualq}
Let $S=K[x_1,\ldots, x_n]$, $I_q\subset S$ with $1\leq q\leq n$. Then
  $$I_q^*=I_{n-q+1}.$$ 
\end{prop}

\begin{proof}
Let $I_q=(x_{i_1}\cdots x_{i_q}: 1\leq i_1 < \cdots < i_q \leq n)$, we consider $I_q$ as a Stanley-Reisner ideal of the simplicial complex $\Delta$, that is
$$I_q=I_{\Delta}=(x^\tau :\tau\notin \Delta),$$
where $\tau=\{i_1,\ldots,i_q\}$ and $x^\tau=x_{i_1}\cdots x_{i_q}$.

This implies that the simplicial complex $\Delta$ is pure and its facets are all the facets of dimension $q-2$, that are $\sigma=\{i_1,\ldots,i_{q-1}\}$ with $1\leq i_1<i_2<\ldots<i_{q-1}\leq n$.

We want to calculate $\Delta^*=\{\overline{\tau}:\tau\notin \Delta\}$. If we consider a simplex $\sigma \in \Delta$ such that $|\sigma|<q-1$ then $\sigma \cup \{i\}\in \Delta$. Therefore to obtain a minimal non face of $\Delta$ we have to consider a facet in $\Delta$ and add a vertex not in the facet. That is $\tau=\{i_1, \ldots,i_q\}$, with $1\leq i_1<i_2<\ldots<i_{q}\leq n$.

Therefore $\overline{\tau}=\{i'_1,\ldots, i'_{n-q}\}$, with $1\leq i'_1< \cdots <i'_{n-q}\leq n$, are the facets of $\Delta^*$. The minimal non faces of $\Delta^*$ are $\{i'_1,\ldots,i'_{n-q+1}\}$ and its Stanley-Reisner ideal is
$$I_{\Delta^*}=(x_{i'_1}\cdots x_{i'_{n-q+1}}:1\leq i'_1< \cdots <i'_{n-q+1}\leq n )=I_{n-q+1}.$$
\end{proof}

\begin{prop}\label{theo:dualsum}
Let $S=K[x_1,\ldots,x_n, y_1,\ldots,y_m]$, $(I_q+J_r)\subset S$ with $1\leq q \leq n$, $1\leq r \leq m$. Then
  $$(I_q+J_r)^*=I_{n-q+1} J_{m-r+1}.$$ 
\end{prop}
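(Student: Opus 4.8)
The plan is to compute $(I_q+J_r)^*$ directly from the definition of the Alexander dual and then to identify the two resulting pieces by means of Proposition \ref{theo:dualq}. First I would write down the minimal monomial generators of $I_q+J_r$: these are the square-free monomials $x_{i_1}\cdots x_{i_q}$ of degree $q$ in the $\mathbf{x}$-variables together with the square-free monomials $y_{j_1}\cdots y_{j_r}$ of degree $r$ in the $\mathbf{y}$-variables. Applying the definition of the Alexander dual, $(I_q+J_r)^*$ is the intersection of the prime ideals $\mathfrak{m}_\alpha$ associated with all these generators, and since the supports of the $\mathbf{x}$-generators and the $\mathbf{y}$-generators are disjoint this intersection splits as
$$(I_q+J_r)^*=\Big(\bigcap_{1\leq i_1<\cdots<i_q\leq n}(x_{i_1},\ldots,x_{i_q})\Big)\cap\Big(\bigcap_{1\leq j_1<\cdots<j_r\leq m}(y_{j_1},\ldots,y_{j_r})\Big).$$

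Next I would recognize each of the two inner intersections. The first one involves only the $\mathbf{x}$-variables and is exactly the Alexander dual of $I_q$ computed in $K[\mathbf{x}]$, extended to $S$; by Proposition \ref{theo:dualq} it equals $I_{n-q+1}S$. By the evident symmetry between the two sets of variables, the same proposition applied to the $\mathbf{y}$-variables gives $J_r^*=J_{m-r+1}$, so the second intersection equals $J_{m-r+1}S$. Hence $(I_q+J_r)^*=(I_{n-q+1}S)\cap(J_{m-r+1}S)$.

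The key remaining step, and the one I expect to be the main point of the argument, is to show that this intersection of the two extended ideals is in fact the product $I_{n-q+1}J_{m-r+1}$. This is where the disjointness of the variable sets is genuinely used: for a monomial ideal $A$ generated in the $\mathbf{x}$-variables and a monomial ideal $B$ generated in the $\mathbf{y}$-variables, a monomial $x^a y^b$ lies in $AS$ if and only if $x^a\in A$, and lies in $BS$ if and only if $y^b\in B$, so $x^a y^b\in AS\cap BS$ precisely when $x^a\in A$ and $y^b\in B$, i.e. when $x^a y^b\in AB$. Applying this with $A=I_{n-q+1}$ and $B=J_{m-r+1}$ yields $(I_{n-q+1}S)\cap(J_{m-r+1}S)=I_{n-q+1}J_{m-r+1}$, which completes the proof.
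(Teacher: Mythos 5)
Your proof is correct and follows essentially the same route as the paper: both compute $(I_q+J_r)^*$ from the definition as an intersection of the primes $\mathfrak{m}_\alpha$ and $\mathfrak{n}_\beta$, identify the $\mathbf{x}$-part and $\mathbf{y}$-part via Proposition \ref{theo:dualq}, and use the disjointness of the variable sets (the paper's ``coprimality'') to replace the intersection by the product. You merely spell out the intersection-equals-product step in more detail than the paper does.
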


\begin{proof}
Let $$I_q+J_r=(x_{i_1}\cdots x_{i_q}, y_{j_1}\cdots y_{j_r}: 1\leq i_1 < \cdots < i_q \leq n,1\leq j_1 < \cdots < j_r \leq m),$$
by Alexander duality we have 

 $$(I_q+J_r)^*=(\mathop\bigcap\limits_\alpha \mathfrak{m}_\alpha) \cap (\mathop\bigcap\limits_\beta \mathfrak{n}_\beta),$$
where $\mathfrak{m}_\alpha=(x_{i_1},\ldots,x_{i_q})$, $\mathfrak{n}_\beta=(y_{j_1},\ldots,y_{j_r})$.
By the coprimality of $\mathfrak{m}$ and $\mathfrak{n}$ we obtain
 $$(\mathop\bigcap\limits_\alpha \mathfrak{m}_\alpha )\cap (\mathop\bigcap\limits_\beta \mathfrak{n}_\beta)=(\mathop\bigcap\limits_\alpha\mathfrak{m}_\alpha ) (\mathop\bigcap\limits_\beta \mathfrak{n}_\beta)=(I_q)^* (J_r)^*,$$
and by Proposition \ref{theo:dualq} we have the assertion.

\end{proof}

\begin{cor}\label{cor:dualprod}
Let $S=K[x_1,\ldots,x_n,y_1,\ldots,y_m]$, $I_qJ_r\subset S$ with $1\leq q \leq n$, $1\leq r \leq m$. Then
  $$(I_qJ_r)^*=I_{n-q+1}+ J_{m-r+1}.$$ 
\end{cor}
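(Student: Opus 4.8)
The plan is to deduce the statement from Proposition \ref{theo:dualsum} together with the fact that Alexander duality is an involution on square-free monomial ideals, namely $(I^*)^* = I$ (a standard property of Alexander duality; see \cite{MS}). The point is that $I_q J_r$ is itself the Alexander dual of a sum of the form $I_a + J_b$ for suitable indices $a,b$, so that dualizing once more recovers that sum.

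First I would check that the hypotheses $1\leq q\leq n$ and $1\leq r\leq m$ force $1\leq n-q+1\leq n$ and $1\leq m-r+1\leq m$, so that Proposition \ref{theo:dualsum} applies to the ideal $I_{n-q+1}+J_{m-r+1}$. Applying that proposition with $q$ replaced by $n-q+1$ and $r$ replaced by $m-r+1$ gives
$$(I_{n-q+1}+J_{m-r+1})^* = I_{n-(n-q+1)+1}\, J_{m-(m-r+1)+1} = I_q J_r.$$
It then remains only to dualize both sides: since $(-)^*$ is an involution,
$$(I_qJ_r)^* = \bigl((I_{n-q+1}+J_{m-r+1})^*\bigr)^* = I_{n-q+1}+J_{m-r+1},$$
which is exactly the assertion.

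The only delicate point is the index bookkeeping in the substitution, so I expect this to be the main (and essentially only) obstacle rather than any substantive difficulty. If one prefers to avoid invoking the involution, the same conclusion follows from a direct computation of the intersection $\bigcap_{\alpha,\beta}(\mathfrak{m}_\alpha+\mathfrak{n}_\beta)$ appearing in the definition of the dual: a square-free monomial supported on $A\subseteq\{x_1,\dots,x_n\}$ and $B\subseteq\{y_1,\dots,y_m\}$ lies in this intersection precisely when either every $q$-subset of the $x$-variables meets $A$ or every $r$-subset of the $y$-variables meets $B$, i.e.\ when $|A|\geq n-q+1$ or $|B|\geq m-r+1$. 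The minimal such monomials are exactly the square-free generators of $I_{n-q+1}$ together with those of $J_{m-r+1}$, yielding $(I_qJ_r)^*=I_{n-q+1}+J_{m-r+1}$ once more.
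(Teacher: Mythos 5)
Your proposal is correct and follows the same route as the paper: apply Proposition \ref{theo:dualsum} with $q$, $r$ replaced by $n-q+1$, $m-r+1$ and then use the involution $(I^*)^*=I$. You simply spell out the index substitution more explicitly than the paper does.
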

\begin{proof}
 By Alexander duality $(I^*)^*=I$, therefore the assertion follows by Proposition \ref{theo:dualsum}.
\end{proof}

\begin{rem}\label{rem:newmix}
 We observe that in general the class of mixed product ideals is not closed under Alexander duality. In fact 
\[(I_qJ_r+ I_sJ_t)^*=I_{n-q+1} + I_{n-s+1} J_{m-r+1} +J_{m-t+1},\]
with $1\leq q< s \leq n,\ 1\leq t< r\leq m$. 

To calculate this duality we observe that
 $$(I_qJ_r+I_sJ_t)^*=(\mathop\bigcap\limits_{\alpha,\beta} (\mathfrak{m}_\alpha +\mathfrak{n}_\beta ))\cap (\mathop\bigcap\limits_{\gamma,\delta} (\mathfrak{m}_\gamma+\mathfrak{n}_\delta)),$$
where $\mathfrak{m}_\alpha=(x_{i_1},\ldots,x_{i_q})$, $\mathfrak{n}_\beta=(y_{j_1},\ldots,y_{j_r})$, $\mathfrak{m}_\gamma=(x_{i'_1},\ldots,x_{i'_s})$
and  $\mathfrak{n}_\delta=(y_{j'_1},\ldots,y_{j'_t})$.

By Corollary \ref{cor:dualprod} we obtain
$$(\mathop\bigcap\limits_{\alpha,\beta} \mathfrak{m}_\alpha +\mathfrak{n}_\beta )\cap (\mathop\bigcap\limits_{\gamma,\delta} \mathfrak{m}_\gamma+\mathfrak{n}_\delta)=
(I_{n-q+1} + J_{m-r+1})\cap (I_{n-s+1} + J_{m-t+1}),$$
and since $I_{n-s+1}\supset I_{n-q+1}$,  $J_{m-r+1}\supset J_{m-t+1}$ we have the assertion.
\end{rem}

%

\section{Betti numbers of mixed product ideals}\label{sec:Betti}
Let $K$ be a field, $S=K[x_1,\ldots,x_n]$ be a polynomial ring, $I\subset S$, and let $\mathbb{F}$ be the minimal free resolution of the ideal $I$.
Then
\[\mathbb{F}:\  0\to F_p \to \cdots \to F_{p-1}\to\cdots\to F_0\to I\rightarrow 0\]

\noindent
with $F_i=\mathop\oplus\limits_jS(-j)^{\beta_{ij}}$. We want to calculate the Betti numbers, $\beta_{ij}$, when $I$ is a mixed product ideals.

\begin{lem}\label{th:bettiq}
Let $S=K[x_1,\ldots,x_n]$, $I_q\subset S$ with $1\leq q\leq n$.
 $$\beta_i (I_q)=\beta_{i,q+i} (I_q)= \binom {n} {q+i} \binom {q+i-1} {i},\hspace{.5cm}i\geq 0.$$
\end{lem}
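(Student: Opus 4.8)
The plan is to realize $I_q$ as a Stanley--Reisner ideal and feed the restrictions of the associated complex into Hochster's formula. As already recorded in the proof of Proposition~\ref{theo:dualq}, writing $I_q=I_\Delta$ forces $\Delta$ to be the $(q-2)$-skeleton of the full simplex on $[n]=\{1,\dots,n\}$, i.e. $\Delta=\{\sigma\subseteq[n]:|\sigma|\leq q-1\}$. Since the minimal free resolution of the ideal $I_q$ is the resolution of $S/I_q$ shifted by one homological degree, Hochster's formula (\cite{BH}, Chapter~5.5) gives
\[
\beta_{i,j}(I_q)=\beta_{i+1,j}(S/I_q)=\sum_{\substack{W\subseteq[n]\\ |W|=j}}\dim_K \tilde H_{j-i-2}(\Delta|_W;K),
\]
so the whole computation reduces to understanding the reduced homology of the restrictions $\Delta|_W$.

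First I would observe that for $W\subseteq[n]$ with $|W|=j$ the restriction $\Delta|_W=\{\sigma\subseteq W:|\sigma|\leq q-1\}$ is exactly the $(q-2)$-skeleton of the full simplex on the $j$ vertices of $W$; in particular it depends on $W$ only through $j=|W|$. If $j\leq q-1$ this is the full simplex on $W$, hence acyclic, so only the range $j\geq q$ can contribute.

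The key step---and the main obstacle---is to compute $\tilde H_\bullet(\Delta|_W;K)$ for $j\geq q$. The cleanest route I would take is to regard the reduced (augmented) simplicial chain complex of the full $(j-1)$-simplex, which is exact because that simplex is contractible, and to note that the chain complex of the $(q-2)$-skeleton is obtained from it by discarding all terms in homological degree above $q-2$. Brutally truncating an exact complex produces homology only at the truncation spot, so $\tilde H_\ell(\Delta|_W;K)=0$ for $\ell\neq q-2$, while $\tilde H_{q-2}(\Delta|_W;K)$ is the space of $(q-2)$-cycles, equal to the image of the $(q-1)$-st boundary of the full simplex. A rank count---equivalently, evaluating the reduced Euler characteristic and invoking the partial alternating sum identity $\sum_{s=0}^{q-1}(-1)^s\binom{j}{s}=(-1)^{q-1}\binom{j-1}{q-1}$---then yields $\dim_K\tilde H_{q-2}(\Delta|_W;K)=\binom{j-1}{q-1}$. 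Alternatively one may quote the standard fact that the $(q-2)$-skeleton of a $(j-1)$-simplex is homotopy equivalent to a wedge of $\binom{j-1}{q-1}$ spheres of dimension $q-2$.

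Finally I would assemble the pieces. The term $\tilde H_{j-i-2}(\Delta|_W;K)$ is nonzero precisely when $j-i-2=q-2$, i.e. $j=q+i$; this already shows the resolution is linear, so $\beta_{i,j}(I_q)=0$ unless $j=q+i$, which is why the notation $\beta_i=\beta_{i,q+i}$ is unambiguous. For $j=q+i$ every one of the $\binom{n}{q+i}$ subsets $W$ of size $q+i$ contributes $\binom{(q+i)-1}{q-1}$, whence
\[
\beta_{i,q+i}(I_q)=\binom{n}{q+i}\binom{q+i-1}{q-1}=\binom{n}{q+i}\binom{q+i-1}{i},
\]
the last equality being the symmetry $\binom{q+i-1}{q-1}=\binom{q+i-1}{i}$. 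One could instead run the dual form of Hochster's formula on $\Delta^*$, the $(n-q-1)$-skeleton coming from $I_q^*=I_{n-q+1}$ of Proposition~\ref{theo:dualq}, but the restrictions of $\Delta^*$ are again skeleta of simplices and the computation is identical, so the direct route above seems preferable.
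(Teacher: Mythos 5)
Your proof is correct, but it runs Hochster's formula in the opposite direction from the paper. The paper first quotes the $q$-linearity of the resolution from Herzog--Hibi, then uses the \emph{dual} form of Hochster's formula, $\beta_{i,\sigma}(I_\Delta)=\dim_K\widetilde H_{i-1}(\link_{\Delta^*}(\overline\sigma);K)$, which requires the computation $I_q^*=I_{n-q+1}$ from Proposition~\ref{theo:dualq}; the link of $\overline\sigma_0$ is then the collection of faces of size at most $i$ of a simplex on $q+i$ vertices, and $\dim_K\ker\partial_{i-1}=\binom{q+i-1}{i}$ is obtained by induction on a chain of short exact sequences. You instead use the restriction form $\beta_{i+1,W}(S/I_\Delta)=\dim_K\widetilde H_{|W|-i-2}(\Delta|_W;K)$ directly on $\Delta$, identify $\Delta|_W$ as the $(q-2)$-skeleton of a simplex on $|W|$ vertices, and get $\binom{|W|-1}{q-1}$ from the reduced Euler characteristic (or the standard wedge-of-spheres fact); the two answers agree via $\binom{q+i-1}{q-1}=\binom{q+i-1}{i}$, reflecting that the two computations are Alexander dual to one another. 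Your route buys two things: it bypasses the Alexander dual entirely, and it \emph{derives} the linearity of the resolution (vanishing unless $j=q+i$) rather than importing it as an external citation. The paper's dual setup is not wasted, though: it is the form that generalizes smoothly to Theorem~\ref{theo:bettiqr}, where the link in $\Delta^*$ splits as a tensor product of the two chain complexes $\CC_x\otimes\CC_y$; in the direct picture one would instead have to handle restrictions that are joins of skeleta. Both arguments ultimately reduce to the same linear algebra fact about truncating an acyclic complex, so the difference is one of packaging rather than substance.
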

\begin{proof}
We observe that $I_q$ has a $q$-linear resolution (see Example 2.2, \cite{HH})  therefore the Betti number of each free-module in the FFR of $I_q$ is
$$\beta_i (I_q) =\beta_{i,q+i} (I_q).$$

 Let $I_\Delta=I_q$, where $\Delta$ is the simplicial complex defined in the proof of Proposition  \ref{theo:dualq}. By Hochster's formula (see Corollary 1.40, \cite{MS}) we have
$$ \beta_{i,q+i} (I_\Delta)= \sum_{|\sigma|=q+i} \beta_{i,\sigma}(I_\Delta) =\sum_{\overline{\sigma}} \dim_K \widetilde{H}_{i-1}(\link_{\Delta^*}(\overline{\sigma};K).$$

By the symmetry between simplices in $\Delta$ of the same dimension, we obtain
$$\sum_{\overline{\sigma}} \dim_K \widetilde{H}_{i-1}(\link_{\Delta^*}(\overline{\sigma};K)=\binom {n} {q+i} \dim_K \widetilde{H}_{i-1}(\link_{\Delta^*}(\overline{\sigma}_0;K) $$
where $\sigma_0=\{1,\ldots, q+i\}\in \Delta$, $\overline{\sigma}_0=\{q+i+1,\ldots,n\}\in \Delta^*$ and $\binom {n} {q+i}$ is the number of simplices of dimension $q+i-1$ in $\Delta$.

Since $I_{\Delta^*}=I_{n-q+1}$ (see Proposition \ref{theo:dualq}) the facets of $\Delta^*$ are $\{{i_1},\ldots, {i_{n-q}}\}$, $1\leq i_1 < \ldots < i_{n-q}\leq n$, therefore 
$$\link \overline{\sigma}_0=\{\{1,\ldots,i\},\ldots, \{q+1,\ldots,q+i\} \}.$$

The chain complex of $\link  \overline{\sigma}_0$ is
$$0\to C_{i-1} \stackrel{\partial_{i-1}}{\to} \cdots \stackrel{\partial_1}{\to} C_0 \stackrel{\partial_0}{\to} C_{-1} \to 0, $$
where $\dim_K C_j =\binom {q+i}{j+1}$, $j=-1,0,\ldots, i-1$.

It is easy to observe that $\widetilde{H}_{j}(\link_{\Delta^*}(\overline{\sigma}_0;K))=0$, for $j=0,\ldots,i-2$, since it is the ``truncated'' chain complex of the simplex with $q+i$ vertices, $\sigma_0$. Therefore we need to calculate $\dim_K (\widetilde{H}_{i-1}(\link_{\Delta^*}(\overline{\sigma}_0;K))\cong \ker \partial_{i-1})$.

We want to show that $\dim_K \ker \partial_{j-1}=\binom {q+i-1}{j}$ and we make induction on the length of the exact sequence of vector spaces, $j$,
$$0\to \ker \partial_{j-1} \to   C_{j-1} \stackrel{\partial_{j-1}}{\to} \cdots \stackrel{\partial_1}{\to} C_0 \stackrel{\partial_0}{\to} C_{-1} \to 0. $$

For $j=1$ we obtain

$$0\to \ker \partial_{0} \to   C_{0} \stackrel{\partial_0}{\to} C_{-1} \to 0 $$
and 
$$\dim_K \ker \partial_{0}= \dim_K C_{0} - \dim_K C_{-1}= q+i -1$$
as expected.

Let $\dim_K \ker \partial_{j-1}=\binom {q+i-1}{j}$. We consider the short exact sequence

$$0\to \ker \partial_{j} \to   C_{j} \stackrel{\partial_{j}}{\to} \ker \partial_{j-1}\to 0,$$
and since
$$\binom{q+i-1}{j+1}+\binom{q+i-1}{j}= \binom{q+i}{j+1}$$
we obtain the assertion.
\end{proof}

\begin{thm}\label{theo:bettiqr}
Let $S=K[x_1,\ldots,x_n,y_1,\ldots,y_m]$, $I_qJ_r\subset S$ with $1\leq q \leq n$, $1\leq r \leq m$. Then for $i\geq 0$, we have
\begin{eqnarray*}
 \beta_i (I_qJ_r) & = & \beta_{i,q+r+i} (I_qJ_r)\\
                  & = & \sum_{j+k=i} \binom {n} {q+j} \binom {m} {r+k}\binom {q+j-1} {j}\binom {r+k-1} {k}.
\end{eqnarray*}
\end{thm}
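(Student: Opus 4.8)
The plan is to exploit the tensor-product structure of $S$ and reduce everything to Lemma \ref{th:bettiq} by a K\"unneth argument. Write $A=K[x_1,\ldots,x_n]$ and $B=K[y_1,\ldots,y_m]$, so that $S=A\otimes_K B$. Since the generators of $I_q$ involve only the $\mathbf{x}$ and those of $J_r$ only the $\mathbf{y}$, every generator $x^\tau y^\sigma$ of $I_qJ_r$ factors as a generator of $I_q$ times a generator of $J_r$; expanding an arbitrary element $\sum s_i f_i g_i$ with $s_i=\sum_j a_j\otimes b_j$ and absorbing $a_j$ into $f_i\in I_q$ and $b_j$ into $g_i\in J_r$ shows that, as $S$-submodules of $S$, the multiplication map identifies $I_qJ_r$ with $I_q\otimes_K J_r$ (injectivity of $I_q\otimes_K J_r\to A\otimes_K B$ comes from flatness over the field $K$).

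First I would take the minimal graded free resolutions $\mathbb{F}_\bullet$ of $I_q$ over $A$ and $\mathbb{G}_\bullet$ of $J_r$ over $B$; by Lemma \ref{th:bettiq} these are linear, with $F_j=A(-(q+j))^{\beta_j(I_q)}$ and $G_k=B(-(r+k))^{\beta_k(J_r)}$. Then I would form the tensor-product complex $\mathbb{F}_\bullet\otimes_K\mathbb{G}_\bullet$ of \emph{free} $S$-modules, with $(\mathbb{F}\otimes_K\mathbb{G})_i=\bigoplus_{j+k=i}F_j\otimes_K G_k$ and the standard differential $\partial(u\otimes v)=\partial_{\mathbb{F}}u\otimes v+(-1)^{j}\,u\otimes\partial_{\mathbb{G}}v$ for $u\in F_j$. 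Note that $F_j\otimes_K G_k=S(-(q+r+i))^{\beta_j(I_q)\beta_k(J_r)}$ when $j+k=i$, so the $i$-th term is concentrated in the single internal degree $q+r+i$.

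The key step is to show this complex is the minimal free resolution of $I_qJ_r$. Because $K$ is a field, tensoring is exact and the K\"unneth formula gives $H_i(\mathbb{F}\otimes_K\mathbb{G})\cong\bigoplus_{j+k=i}H_j(\mathbb{F})\otimes_K H_k(\mathbb{G})$ with no correction terms; since $\mathbb{F}$ and $\mathbb{G}$ are resolutions, the right-hand side equals $I_q\otimes_K J_r=I_qJ_r$ for $i=0$ and vanishes for $i>0$, so $\mathbb{F}\otimes_K\mathbb{G}$ resolves $I_qJ_r$ over $S$. For minimality, the entries of $\partial_{\mathbb{F}}$ lie in $\mathfrak{m}_A$ and those of $\partial_{\mathbb{G}}$ in $\mathfrak{m}_B$, hence every entry of the tensor differential lies in $\mathfrak{m}_A+\mathfrak{m}_B=\mathfrak{m}_S$. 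Minimality then yields $\beta_i(I_qJ_r)=\sum_{j+k=i}\beta_j(I_q)\beta_k(J_r)$, the concentration in degree $q+r+i$ gives $\beta_i(I_qJ_r)=\beta_{i,q+r+i}(I_qJ_r)$, and substituting the values from Lemma \ref{th:bettiq} produces the claimed double-binomial sum. I expect the only real obstacle to be the careful bookkeeping in these last two points: the identification $I_qJ_r=I_q\otimes_K J_r$ inside $S$ and the correct application of K\"unneth over $K$ to guarantee both exactness and minimality.

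An alternative entirely in the spirit of Section \ref{sec:Alex} is to run Hochster's formula on the Alexander dual $(I_qJ_r)^*=I_{n-q+1}+J_{m-r+1}$ from Corollary \ref{cor:dualprod}: its Stanley--Reisner complex $\Delta^*$ is the join of the $(n-q-1)$-skeleton on the $\mathbf{x}$ and the $(m-r-1)$-skeleton on the $\mathbf{y}$, links in a join are joins of links, and the K\"unneth theorem for joins, $\widetilde{H}_{i-1}(X*Y)\cong\bigoplus_{p+q=i-2}\widetilde{H}_p(X)\otimes_K\widetilde{H}_q(Y)$, again convolves the two homology computations already performed in the proof of Lemma \ref{th:bettiq}, giving the same formula.
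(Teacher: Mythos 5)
Your main argument is correct, and it takes a genuinely different route from the paper. The paper stays entirely on the combinatorial side: it invokes the $(q+r)$-linearity of $I_qJ_r$ from \cite{IR}, passes to the Alexander dual $I_{n-q+1}+J_{m-r+1}$ via Corollary \ref{cor:dualprod}, applies Hochster's formula, and identifies the chain complex of $\link_{\Delta^*}(\overline{\sigma}_0)$ with the tensor product $\CC_x\otimes\CC_y$ of the two truncated simplex complexes analyzed in Lemma \ref{th:bettiq} --- so your ``alternative'' sketch at the end is essentially the paper's proof. Your primary argument instead works upstairs: the identification $I_qJ_r\cong I_q\otimes_K J_r$ inside $S=A\otimes_K B$ is correct (surjectivity by absorbing coefficients, injectivity by flatness over $K$), the K\"unneth formula over a field shows $\mathbb{F}_\bullet\otimes_K\mathbb{G}_\bullet$ is acyclic with $H_0=I_qJ_r$, and minimality is clear since the entries of $\partial_{\mathbb{F}}\otimes 1\pm 1\otimes\partial_{\mathbb{G}}$ lie in $\mathfrak{m}_A S+\mathfrak{m}_B S\subset\mathfrak{m}_S$. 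This buys you two things the paper has to import separately: the convolution $\beta_i(I_qJ_r)=\sum_{j+k=i}\beta_j(I_q)\beta_k(J_r)$ and the concentration $\beta_i=\beta_{i,q+r+i}$ both fall out of the grading of $F_j\otimes_K G_k=S(-(q+r+i))^{\beta_j\beta_k}$, so you do not need the citation to Lemma 2.5 of \cite{IR} for the linear resolution, nor Hochster's formula at all; moreover the argument generalizes verbatim to any product of ideals in disjoint sets of variables. The only point worth stating explicitly is that $\beta_j(I_q)$ must be read over $A=K[\mathbf{x}]$ and $\beta_k(J_r)$ over $B=K[\mathbf{y}]$ (which is what Lemma \ref{th:bettiq} computes), not over $S$; with that understood, substituting the lemma gives exactly the claimed double-binomial sum.
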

\begin{proof}
Since $I_qJ_r$ has a $(q+r)$-linear resolution (see Lemma 2.5, \cite{IR}) the Betti number of each free-module in the FFR of $I_qJ_r$ is
$$\beta_i (I_qJ_r) =\beta_{i,q+r+i} (I_qJ_r).$$
Let $I_\Delta=I_qJ_r$, by Hochster's formula we have that
$$ \beta_{i,q+r+i} (I_\Delta)= \sum_{|\sigma|=q+r+i} \beta_{i,\sigma}(I_\Delta) =\sum_{\overline{\sigma}} \dim_K \widetilde{H}_{i-1}(\link_{\Delta^*}(\overline{\sigma};K).$$

Let $\deg$ a bidegree on the ring $S=K[x_1,\ldots,x_n,y_1,\ldots, y_m]$, where $\deg(x_i)=(1,0)$ and $\deg(y_j)=(0,1)$.

It is easy to observe that the number of squarefree monomials in $S$ of the same bidegree $(q+j,r+k)$ is
$$
\binom {n} {q+j} \binom {m} {r+k}.
$$

By the symmetry between simplices with the ``same bidegree'' in $\Delta$, we fix  $\sigma_0=\{x_1,\ldots, x_{q+j}, y_1,\ldots, y_{r+k}\}$
and its corresponding Alexander dual $\overline{\sigma}_0=\{ x_{q+j+1},\ldots, x_n, y_{r+k+1},\ldots,y_m\}\in \Delta^*$ and calculate  

$$ \dim_K \widetilde{H}_{i-1}(\link_{\Delta^*}(\overline{\sigma}_0;K), $$
with $i=j+k$.

By Corollary \ref{cor:dualprod}, $I_{\Delta^*}=I_{n-q+1}+J_{m-r+1}$, and the facets of 
$\Delta^*$ are 
\[\{x_{i_1},\ldots, x_{i_{n-q}}, y_{j_1},\ldots,y_{j_{m-r}} \},\]
$1\leq i_1 < \ldots < i_{n-q}\leq n$, $1\leq j_1 < \ldots < j_{m-r}\leq m$, therefore 
$$\link \overline{\sigma}_0=\{\{x_1,\ldots,x_j,y_1,\ldots,y_k\},\ldots, \{x_{q+1},\ldots,x_{q+j}, y_{r+1},\ldots, y_{r+k}\}\}.$$
Let $\sigma_x=\{\{x_1,\ldots,x_j\},\ldots, \{x_{q+1},\ldots,x_{q+j}\}\}$ and let
\[
\CC_x: 0\to C_{j-1} \stackrel{\partial_{j-1}}{\to} \cdots \stackrel{\partial_1}{\to} C_0 \stackrel{\partial_0}{\to} C_{-1} \to 0,
\]
the chain complex of $\sigma_x$, let $\sigma_y=\{\{y_1,\ldots,y_k\},\ldots, \{y_{r+1},\ldots, y_{r+k}\}\}$ and let
\[
\CC_y: 0\to C_{k-1} \stackrel{\partial_{k-1}}{\to} \cdots \stackrel{\partial_1}{\to} C_0 \stackrel{\partial_0}{\to} C_{-1} \to 0,
\]
the chain complex of $\sigma_y$.

It is easy to see that the tensor product of the two complexes, $\CC_x\otimes\CC_y$ , is the chain complex of $\link \overline{\sigma}_0$.

Therefore $\widetilde{H}_{i-1}(\link_{\Delta^*}(\overline{\sigma}_0;K)$, with $i=j+k$, is isomorphic to $\ker \partial_{j-1}\otimes \ker \partial_{k-1}$, and from Lemma \ref{th:bettiq} the assertion follows.

\end{proof}

\begin{thm}\label{theo:bettimix}
Let $S=K[x_1,\ldots,x_n,y_1,\ldots,y_m]$, $(I_qJ_r+I_sJ_t)\subset S$ with $0\leq q < s \leq n $, $0 \leq t < r\leq m$. Then
 $$\beta_i (I_qJ_r+I_sJ_t) = \beta_i (I_qJ_r) + \beta_i (I_sJ_t) + \beta_{i-1} (I_sJ_r),\hspace{.5cm}i\geq 1.$$ 
\end{thm}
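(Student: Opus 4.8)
The plan is to realise the minimal free resolution of $I_qJ_r+I_sJ_t$ as a mapping cone coming from the Mayer--Vietoris short exact sequence of the two summands, and then to check that this mapping cone is already minimal for degree reasons.

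First I would identify the intersection. Describing each ideal through the monomials it contains, $I_aJ_b$ consists exactly of the monomials whose $x$-support has cardinality at least $a$ and whose $y$-support has cardinality at least $b$. Hence, since $q<s$ and $t<r$, a monomial lies in both $I_qJ_r$ and $I_sJ_t$ precisely when its $x$-support has size at least $\max(q,s)=s$ and its $y$-support has size at least $\max(r,t)=r$, so that $I_qJ_r\cap I_sJ_t=I_sJ_r$. This yields the short exact sequence of graded $S$-modules
$$0\to I_sJ_r\to I_qJ_r\oplus I_sJ_t\to I_qJ_r+I_sJ_t\to 0,$$
where the first map is $m\mapsto(m,-m)$ and the second is $(u,v)\mapsto u+v$.

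Next I would record the relevant degrees. By the defining condition $q+r=s+t$, both $I_qJ_r$ and $I_sJ_t$ are generated in the single degree $d:=q+r=s+t$, and by Theorem \ref{theo:bettiqr} (together with Lemma \ref{th:bettiq} in the boundary cases $q=0$ or $t=0$) each has a $d$-linear resolution, so its $i$-th free module is generated in degree $d+i$. Setting $e:=s-q=r-t\ge 1$, the ideal $I_sJ_r$ is generated in degree $s+r=d+e$ and has an $(s+r)$-linear resolution, so the $i$-th free module in \emph{its} resolution is generated in degree $d+e+i$. Lifting the inclusion $I_sJ_r\hookrightarrow I_qJ_r\oplus I_sJ_t$ to a comparison map $\phi\colon \mathbb{F}(I_sJ_r)\to \mathbb{F}(I_qJ_r)\oplus \mathbb{F}(I_sJ_t)$ of minimal resolutions, the mapping cone of $\phi$ is a free resolution of the cokernel $I_qJ_r+I_sJ_t$. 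In homological degree $i$ the component of $\phi$ is a homogeneous map
$$S(-(d+e+i))^{\beta_i(I_sJ_r)}\longrightarrow S(-(d+i))^{\beta_i(I_qJ_r)+\beta_i(I_sJ_t)},$$
whose matrix entries are homogeneous of degree $(d+e+i)-(d+i)=e\ge 1$ and hence lie in the maximal ideal $\mathfrak{m}$. Therefore every entry of $\phi$ lies in $\mathfrak{m}$, the mapping cone is a \emph{minimal} free resolution of $I_qJ_r+I_sJ_t$, and reading off its ranks in homological degree $i\ge 1$ gives $\beta_i(I_qJ_r+I_sJ_t)=\beta_i(I_qJ_r)+\beta_i(I_sJ_t)+\beta_{i-1}(I_sJ_r)$, as claimed.

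The main obstacle is precisely the minimality of the mapping cone: the long exact sequence in $\Tor$ by itself would only give an alternating relation, and the connecting maps could a priori produce cancellation. What saves the argument is the strict degree gap $e=s-q=r-t\ge 1$, which forces the comparison map $\phi$ into $\mathfrak{m}$ and kills any cancellation; the only genuine inputs are the linearity of the three resolutions and the intersection identity $I_qJ_r\cap I_sJ_t=I_sJ_r$.
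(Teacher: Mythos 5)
Your proof is correct and rests on the same decomposition as the paper's, namely the short exact sequence $0\to I_sJ_r\to I_qJ_r\oplus I_sJ_t\to I_qJ_r+I_sJ_t\to 0$ (the paper works with the quotient-ring version and, unlike you, does not actually justify the identity $I_qJ_r\cap I_sJ_t=I_sJ_r$, so your support argument is a genuine addition). Where you diverge is in how the Betti numbers are extracted: the paper passes to the long exact sequence of $\Tor_i(K,-)_{i+j}$ and runs a case analysis over the three internal degrees $j\in\{q+r-1,\ s+t-1,\ s+r-2\}$ in which anything can be nonzero, using linearity of the resolutions to see that the sequence degenerates in each case; you instead build the mapping cone of a comparison map $\phi$ between minimal resolutions and observe that the strict degree gaps force every entry of $\phi$ into $\mathfrak{m}$, so the cone is already minimal. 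The two arguments consume exactly the same inputs (the intersection identity plus linearity of the three resolutions), but yours avoids the case distinctions and yields the graded Betti numbers at a stroke. One small caveat: you invoke $q+r=s+t$ in order to set $e=s-q=r-t$, whereas the theorem as stated --- and the paper's proof, which explicitly treats the subcases $s+r-2>q+r-1$, etc. --- does not assume this equality; your argument survives verbatim if you simply note that the two components of $\phi_i$ map $S(-(s+r+i))$ into $S(-(q+r+i))$ and into $S(-(s+t+i))$ with respective degree gaps $s-q\ge 1$ and $r-t\ge 1$, both positive, so no relation between them is needed.
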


\begin{proof}

We consider the exact sequence
  \begin{equation}
    \label{eq:exact1}
    0\to S/I_s J_r\to S/I_qJ_r\oplus S/I_sJ_t\to S/(I_qJ_r+I_sJ_t )\to 0.
  \end{equation}
  
We want to study $\Tor_i(K,S/(I_qJ_r+I_sJ_t))_{i+j}$. By the sequence (\ref{eq:exact1}) we obtain the long exact sequence of $\Tor$ 
  \begin{eqnarray*}
    \cdots\to\Tor_i(K,S/I_s J_r)_{i+j}&\to&\Tor_i(K,S/I_qJ_r\oplus S/I_sJ_t)_{i+j} \to\\
    \to \Tor_i(K,S/(I_qJ_r+I_sJ_t))_{i+j} &\to&  \Tor_{i-1}(K,S/I_sJ_r)_{i+j}  \to\cdots\\
    \to \Tor_{i-1}(K,S/I_qJ_r \oplus S/I_sJ_t)_{i+j} &\to&  \Tor_{i-1}(K,S/I_sJ_r)_{i+j}  \to\cdots
  \end{eqnarray*}
  
We observe that $S/(I_kJ_l)$ has a $(k+l-1)$-linear resolution therefore the Betti number of each free-module in the FFR of $S/I_kJ_l$ is
$$\beta_i (S/I_kJ_l) =\beta_{i,k+l+i-1} (S/I_kJ_l).$$

This implies that if $j\notin \{q+r-1,s+t-1,s+r-2\}$, $\Tor_i(K,S/I_qJ_r\oplus S/I_sJ_t)_{i+j}=0$ and $\Tor_{i-1}(K,S/I_sJ_r)_{i+j}=0$, that is 
\[\Tor_i(K,S/(I_qJ_r+I_sJ_t))_{i+j}=0.\]
Therefore we have to study the following cases: 
\begin{enumerate}
 \item [1)] $j=s+r-2$;
 \item [2)] $j=q+r-1$;
 \item [3)] $j=s+t-1$.	
\end{enumerate}
We may assume $q+r\geq s+t$.

1) Let $j=s+r-2$. Then $\Tor_i(K,S/I_s J_r)_{i+j}=0$ and $\Tor_{i-1}(K,S/I_qJ_r \oplus S/I_sJ_t)_{i+j}=0$. Therefore we have the exact sequence

\medskip
\noindent
$0\to \Tor_i(K,S/I_qJ_r\oplus S/I_sJ_t)_{i+j}\to$ 

$\hfill\to \Tor_i(K,S/(I_qJ_r+I_sJ_t))_{i+j} \to  \Tor_{i-1}(K,S/I_sJ_r)_{i+j}\to 0.$
\medskip

If $s+r-2>q+r-1$ we obtain  
\[\Tor_i(K,S/(I_qJ_r+I_sJ_t))_{i+j} \cong  \Tor_{i-1}(K,S/I_sJ_r)_{i+j},\]
and to finish we have to study degree 2) and 3).

If $s+r-2=q+r-1>s+t+1$, that is degree 1) and 2) coincide, we obtain  

\medskip
\noindent
$\dim_K \Tor_i (K,S/(I_qJ_r+I_sJ_t))_{i+j} =$

$\hfill\dim_K \Tor_{i}(K,S/I_qJ_r)_{i+j}+\dim_K \Tor_{i-1}(K,S/I_sJ_r)_{i+j}, $
\medskip

\noindent
and we have to study degree 3) to finish.

If $s+r-2=q+r-1=s+t+1$, that is cases 1), 2) and 3) coincide,

\medskip
\noindent
$\dim_K \Tor_i (K,S/(I_qJ_r+I_sJ_t))_{i+j} =\dim_K \Tor_{i}(K,S/I_qJ_r)_{i+j}+$

$\hfill+\dim_K \Tor_{i}(K,S/I_sJ_t)_{i+j}+\dim_K \Tor_{i-1}(K,S/I_sJ_r)_{i+j},$

\medskip
\noindent
and this case is complete.

2) The case $j=q+r-1=s+r-2$ has been already studied in case 1), therefore we assume $j=q+r-1<s+r-2$. We obtain the exact sequence
\[0\to \Tor_i(K,S/I_qJ_r\oplus S/I_sJ_t)_{i+j}\to \Tor_i(K,S/(I_qJ_r+I_sJ_t))_{i+j} \to 0\]
and we continue as in case 1).

3) This case is similar to 2).

The assertion follows.
\end{proof}
\begin{rem}
 For completness we compute the Betti numbers of $(I_s+I_qJ_t+J_r)$ (see Remark \ref{rem:newmix}). That is
 $$\beta_i (I_s+I_qJ_t+J_r) = \beta_i (I_s) + \beta_i (I_qJ_t) +\beta_i (J_r) + \beta_{i-1} (I_qJ_r) + \beta_{i-1} (I_sJ_t),\hspace{.5cm}i\geq 1.$$
To prove this is enough to observe that $(I_s)\cap (I_qJ_t+J_r)=I_sJ_t+I_sJ_r=I_sJ_t$ and we have the exact sequence
  \begin{equation}
    \label{eq:exact2}
    0\to S/I_sJ_t\to S/I_s\oplus S/(I_qJ_t+J_r)\to S/(I_s+I_qJ_t+J_r )\to 0.
  \end{equation}
To obtain the assertion we continue as in the proof of Theorem \ref{theo:bettimix}.
\end{rem}

\section{Cohen-Macaulay Type of mixed product ideals}
Let $K$ be a field, $S=K[x_1,\ldots,x_n]$ be a polynomial ring, $I\subset S$ a graded ideal. We consider $S/I$ as a standard graded $K$-algebra.

We have the following (see \cite{Vi}, Proposition 4.3.4):
\begin{prop}
 Let $S/I$ be a Cohen-Macaulay ring then the type of $S/I$ is equal to the last Betti number in the minimal free  resolution of $S/I$ as an $S$-module.
\end{prop}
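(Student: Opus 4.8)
The plan is to identify the type with the minimal number of generators of the canonical module, and then to read that number off the top of the dualized resolution. Write $R=S/I$, $n=\dim S$, $d=\dim R$, and let $\mathfrak m=(x_1,\dots,x_n)$ be the graded maximal ideal. I would start from the structural fact that for a Cohen--Macaulay graded $K$-algebra the type coincides with the minimal number of generators of the canonical module, $\type(R)=\mu(\omega_R)$ (see \cite{BH}, Chapter 3). Since $R$ is Cohen--Macaulay we have $\depth R=\dim R=d$, so the graded Auslander--Buchsbaum formula gives $\pd_S R=\depth S-\depth R=n-d=:p$. Hence the minimal free resolution
\[\mathbb F:\ 0\to F_p\to\cdots\to F_0\to R\to 0\]
has length exactly $p$, and the last Betti number is $\beta_p(R)=\operatorname{rank}F_p$.

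Next I would use the description of the canonical module over the regular ring $S$. Because $S$ is a polynomial ring, $\omega_S\cong S(-n)$, and for the Cohen--Macaulay quotient $R=S/I$ of dimension $d=n-p$ one has
\[\omega_R\cong\operatorname{Ext}^{p}_S(R,\omega_S)\cong\operatorname{Ext}^{p}_S(R,S)(-n)\]
(see \cite{BH}, Theorem 3.3.7). A graded twist does not change the number of generators, so $\mu(\omega_R)=\mu(\operatorname{Ext}^{p}_S(R,S))$, and it remains to compute $\operatorname{Ext}^{p}_S(R,S)$ from $\mathbb F$. Applying $\operatorname{Hom}_S(-,S)$ to $\mathbb F$ yields the dual complex
\[\mathbb F^*:\ 0\to F_0^*\to\cdots\to F_{p-1}^*\xrightarrow{\ d_p^*\ }F_p^*\to 0,\]
whose top cohomology is $\operatorname{Ext}^{p}_S(R,S)=\operatorname{coker}d_p^*$, since $F_{p+1}^*=0$.

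Finally I would invoke minimality. Because $\mathbb F$ is a minimal resolution, every entry of the differential $d_p\colon F_p\to F_{p-1}$ lies in $\mathfrak m$, hence so does every entry of the transposed map $d_p^*$, and therefore $\operatorname{im}d_p^*\subseteq\mathfrak m F_p^*$. Tensoring with $K=S/\mathfrak m$ then annihilates $d_p^*$, so
\[\operatorname{Ext}^{p}_S(R,S)\otimes_S K=\operatorname{coker}d_p^*\otimes_S K=F_p^*\otimes_S K,\]
which has dimension $\operatorname{rank}F_p^*=\operatorname{rank}F_p=\beta_p(R)$. By Nakayama's lemma $\mu(\operatorname{Ext}^{p}_S(R,S))=\beta_p(R)$, and combining this with the previous step gives $\type(R)=\mu(\omega_R)=\beta_p(R)$, the last Betti number of the minimal free resolution of $R=S/I$. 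The only genuinely non-formal inputs are the two structural facts $\type(R)=\mu(\omega_R)$ and $\omega_R\cong\operatorname{Ext}^{n-d}_S(R,\omega_S)$, that is, the local-duality description of the canonical module; I expect pinning these down with the correct grading shifts to be the main point requiring care, whereas the minimality computation at the end is routine.
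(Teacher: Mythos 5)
Your argument is correct and complete. Note, however, that the paper does not prove this proposition at all: it is quoted verbatim from Villarreal (\cite{Vi}, Proposition 4.3.4) and used as a black box, so there is no internal proof to compare against. What you have written is the standard local-duality argument, and each step checks out: the identification $\type(R)=\mu(\omega_R)$ for a Cohen--Macaulay quotient of a polynomial ring, the Auslander--Buchsbaum computation $\pd_S R=n-d$, the description $\omega_R\cong\operatorname{Ext}^{n-d}_S(R,S)(-n)$, the identification of $\operatorname{Ext}^{p}_S(R,S)$ with the cokernel of the dualized last differential, and the minimality-plus-Nakayama step showing that this cokernel needs exactly $\operatorname{rank}F_p$ generators. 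The only inputs you take on faith are exactly the two you flag, and both are standard (Bruns--Herzog, Proposition 3.3.11 and Theorem 3.3.7); the grading shift you worry about is harmless since a twist does not change the minimal number of generators. So your proposal supplies a correct proof of a statement the paper merely cites.
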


In \cite{IR} we classified the Cohen-Macaulay algebra $S/I$ when $I$ is a mixed product ideal.
From now on we say CM for short.

\begin{prop}\label{CM} Let $S=K[x_1,\ldots,x_n,y_1,\ldots,y_m]$.
\begin{enumerate}
\item Let $0<q\leq n$ then $S/I_q$ is CM with $\dim S/I_q=m+q-1$;
\item Let $0<q\leq n$ and $0<r\leq m$ then $S/(I_qJ_r)$ is CM if and only if $r=m$ and $q=n$ with $\dim S/I_nJ_m=m+n-1$;
\item Let $0<q<s \leq n$ and $0<r\leq m$ then $S/(I_qJ_r+I_s)$ is CM if and only if $s=q+1$ and $r=m$ with $\dim S/(I_qJ_m+I_{q+1})=m+q-1$;
\item Let $0<q<s \leq n$ and $0<t<r\leq m$ then $S/(I_qJ_r+I_sJ_t)$ is CM if and only if $r=m$, $s=n$, $t=m-1$, $q=n-1$ with $\dim S/(I_{n-1}J_m+I_nJ_{m-1})=m+n-2.$
\end{enumerate}
\end{prop}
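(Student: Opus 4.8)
The plan is to reduce Cohen-Macaulayness to the numerical identity $\depth(S/I)=\dim(S/I)$ (which holds, since $\depth(S/I)\leq\dim(S/I)$ always, with equality exactly when $S/I$ is CM) and to evaluate both sides explicitly: the left-hand side from the Betti numbers already computed in Section~\ref{sec:Betti}, and the right-hand side from the Stanley-Reisner complex. Since $S$ is a polynomial ring, the Auslander-Buchsbaum formula gives $\depth(S/I)=(n+m)-\pd_S(S/I)$, and $\pd_S(S/I)=1+\max\{i:\beta_i(I)\neq 0\}$, so the depth is read off from the top nonvanishing homological degree.

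For the depth I would argue family by family. For $I=I_q$, Lemma~\ref{th:bettiq} shows $\beta_i(I_q)\neq 0$ exactly for $0\leq i\leq n-q$, whence $\pd_S(S/I_q)=n-q+1$ and $\depth(S/I_q)=m+q-1$. For $I=I_qJ_r$, Theorem~\ref{theo:bettiqr} shows the top nonvanishing degree is $i=(n-q)+(m-r)$, so $\depth(S/I_qJ_r)=q+r-1$. For the sums, Theorem~\ref{theo:bettimix} expresses $\beta_i$ as the \emph{sum} of the three nonnegative terms $\beta_i(I_qJ_r)+\beta_i(I_sJ_t)+\beta_{i-1}(I_sJ_r)$, so no cancellation can occur and the top nonvanishing degree is the maximum of the three individual top degrees. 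A short comparison using $q<s$ and $t<r$ shows the cross-term $\beta_{i-1}(I_sJ_r)$ is dominated by the others, giving $\depth(S/(I_qJ_r+I_sJ_t))=\min\{q+r,s+t\}-1$. Case (3) is the degeneration $t=0$, treated the same way, with the bookkeeping that $I_sJ_0=I_s$ has projective dimension $n-s$ (only the $x$-variables enter), which yields $\depth(S/(I_qJ_r+I_s))=q+r-1$.

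For the dimension I would write each ideal as $I_\Delta$ and compute $\dim(S/I_\Delta)$ as the maximal cardinality of a facet of $\Delta$. A face is a set $A\cup B$ of $x$- and $y$-variables avoiding every generator, and enumerating the facet types by the range of $|A|$ gives $\dim(S/I_q)=q+m-1$, $\dim(S/I_qJ_r)=\max\{q+m-1,\,n+r-1\}$, $\dim(S/(I_qJ_r+I_s))=\max\{q+m-1,\,s+r-2\}$, and $\dim(S/(I_qJ_r+I_sJ_t))=\max\{q+m-1,\,s+r-2,\,n+t-1\}$.

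Finally I impose $\depth=\dim$. For $I_q$ the two sides coincide identically, so $S/I_q$ is always CM of dimension $m+q-1$, settling (1). For the remaining families the equation $\min\{\cdots\}-1=\max\{\cdots\}$ forces each maximand to meet the minimand; since $m\geq r$ and $n\geq s$, each such equality pins down a parameter: $r=m$ collapses the $q+m$ term, $s=n$ collapses the $n+t$ term, and matching the residual $s+r-2$ term forces $q=n$, $r=m$ in case (2), $s=q+1$, $r=m$ in case (3), and $q=n-1$, $t=m-1$ (together with $s=n$, $r=m$) in case (4); reading off the common value then gives the stated dimensions. The main obstacle is case (4): one must simultaneously control a three-term Betti sum and a three-type facet structure and verify that the system $\min\{q+r,s+t\}-1=\max\{q+m-1,\,s+r-2,\,n+t-1\}$ has \emph{exactly} the solution $r=m$, $s=n$, $q=n-1$, $t=m-1$, the earlier cases being its single-factor or $t=0$ degenerations.
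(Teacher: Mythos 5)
Your proposal is correct, but it takes a genuinely different route from the paper: the paper proves Proposition \ref{CM} simply by citing Proposition 3.1, Theorem 3.2 and Corollary 3.8 of \cite{IR} (where depth and dimension of mixed product ideals were computed directly), whereas you re-derive the classification internally from the Betti-number formulas of Sections \ref{sec:Alex}--\ref{sec:Betti}. Your key observations all check out: the top nonvanishing index of $\beta_i(I_qJ_r)$ is $(n-q)+(m-r)$ by Theorem \ref{theo:bettiqr}, the three-term formula of Theorem \ref{theo:bettimix} is a sum of nonnegative quantities so no cancellation can lower the top degree, and the cross term $\beta_{i-1}(I_sJ_r)$ has top index $(n-s)+(m-r)+1\leq (n-q)+(m-r)$, so Auslander--Buchsbaum gives $\depth = \min\{q+r,s+t\}-1$ (with the correctly flagged exception $t=0$, where $I_s$ lives in the $x$-variables only and the answer is $q+r-1$); the facet enumeration for $\dim(S/I_\Delta)$ and the resolution of $\min=\max$ then pin down exactly the stated parameters. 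This approach buys self-containedness and in a sense inverts the logic of Section 4 of the paper, which uses Proposition \ref{CM} together with Auslander--Buchsbaum to locate the last Betti number: you show the same machinery already yields the CM classification without appealing to \cite{IR}. The only caveat is that your argument leans on Theorem \ref{theo:bettimix}, whose proof in the paper silently uses $I_qJ_r\cap I_sJ_t=I_sJ_r$; if you are reproving \cite{IR} from scratch you inherit that dependency, but that is a feature of the paper's Section \ref{sec:Betti}, not a gap in your reasoning.
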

\begin{proof}
 See Proposition 3.1, Theorem 3.2 and Corollary 3.8 of \cite{IR}, and observe  that a CM ideal is unmixed. 
\end{proof}

By the results of Section \ref{sec:Betti} we want to calculate the type of the CM algebras classified in Proposition \ref{CM}.

\begin{prop} Let $S=K[x_1,\ldots,x_n]$, $I_q\subset S$ and $1\leq q\leq n$ then 
\[\type S/I_q= \binom{n-1}{n-q}\]
\end{prop}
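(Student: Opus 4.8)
The plan is to identify the type of the Cohen--Macaulay ring $S/I_q$ with the last nonzero Betti number of its minimal free resolution, as permitted by the Proposition preceding the statement. By Proposition~\ref{CM}(1) the ring $S/I_q$ is Cohen--Macaulay, so $\type S/I_q$ equals the top Betti number in the resolution of $S/I_q$ as an $S$-module. Since $I_q$ has a $q$-linear resolution and the projective dimension governs where the resolution terminates, the first task is to locate the index $p$ of the last free module. For a Cohen--Macaulay module the Auslander--Buchsbaum formula gives $\pd S/I_q = n - \depth S/I_q = n - \dim S/I_q$, and by Proposition~\ref{CM}(1) we have $\dim S/I_q = m + q - 1$; however, here $S = K[x_1,\dots,x_n]$ involves only the $\mathbf{x}$ variables, so one must use the dimension of $S/I_q$ inside this ring, namely $\dim S/I_q = q-1$, giving $\pd S/I_q = n - (q-1) = n - q + 1$. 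The last Betti number of $S/I_q$ therefore sits in homological degree $n-q+1$.

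Next I would relate the Betti numbers of $S/I_q$ to those of $I_q$ computed in Lemma~\ref{th:bettiq}. Because the resolution of the ideal $I_q$ is obtained from the resolution of $S/I_q$ by a degree shift, we have $\beta_i(S/I_q) = \beta_{i-1}(I_q)$ for $i \geq 1$. The top module of $S/I_q$ lives in homological degree $p = n-q+1$, so it corresponds to $\beta_{p-1}(I_q) = \beta_{n-q}(I_q)$. Plugging $i = n-q$ into the formula from Lemma~\ref{th:bettiq}, I obtain
\[
\beta_{n-q}(I_q) = \binom{n}{q + (n-q)}\binom{q + (n-q) - 1}{n-q} = \binom{n}{n}\binom{n-1}{n-q} = \binom{n-1}{n-q},
\]
since $\binom{n}{n} = 1$. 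This yields exactly the claimed value $\type S/I_q = \binom{n-1}{n-q}$.

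The routine verifications are the index bookkeeping: confirming that $n-q+1$ is indeed the projective dimension, that the degree shift between $I_q$ and $S/I_q$ is applied correctly, and that the binomial coefficients collapse as stated. The main obstacle, and the point requiring the most care, is pinning down the correct projective dimension, i.e. ensuring that the last nonzero free module really occurs at homological position $n-q+1$ rather than one step earlier or later. Since the resolution of $I_q$ is $q$-linear and the Betti numbers $\beta_i(I_q) = \binom{n}{q+i}\binom{q+i-1}{i}$ vanish precisely when $q + i > n$, i.e. when $i > n - q$, the last nonvanishing index for $I_q$ is $i = n-q$, confirming independently that $\pd I_q = n-q$ and hence $\pd S/I_q = n-q+1$. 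With this consistency check in place, substituting $i = n-q$ into Lemma~\ref{th:bettiq} completes the argument.
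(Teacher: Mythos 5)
Your proof is correct and follows essentially the same route as the paper: compute $\pd S/I_q = n-q+1$ via Auslander--Buchsbaum and Proposition~\ref{CM}(1), then read off the top Betti number from Lemma~\ref{th:bettiq} using the shift $\beta_i(S/I_q)=\beta_{i-1}(I_q)$. Your added care about which ambient ring's dimension to use, and the independent check that $\beta_i(I_q)$ vanishes for $i>n-q$, only tighten an argument the paper states more tersely.
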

\begin{proof}
 By Auslander-Buchsbaum Theorem (Theorem 1.3.3, \cite{BH}) and Proposition \ref{CM}.(1), we have 
\[\pd S/I_q= \dim S- \depth S/I_q= \dim S- \dim S/I_q= n-q+1.\] 
By Lemma \ref{th:bettiq} and observing that $\beta_i (I_q)=\beta_{i+1}(S/I_q)$ we have 
\[\type(S/I_q)=\beta_{n-q+1}(S/I_q)=\binom{n-1}{n-q}.\]
\end{proof}

From now on let $S=K[x_1,\ldots,x_n,y_1,\ldots,y_m]$.
\begin{prop}
Let $S/I_qJ_r$ be a Cohen-Macaulay algebra then
\[\type S/I_qJ_r= 1\]
with $q=n$, $r=m$.
\end{prop}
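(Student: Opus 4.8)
The plan is to follow the same template as the preceding proposition for $S/I_q$: first pin down the parameters from the Cohen-Macaulay hypothesis, then compute the projective dimension via Auslander-Buchsbaum, and finally read off the last Betti number from the formula of Theorem \ref{theo:bettiqr}. First I would invoke Proposition \ref{CM}.(2), which tells us that $S/I_qJ_r$ is Cohen-Macaulay precisely when $q=n$ and $r=m$, and that in this case $\dim S/I_nJ_m = m+n-1$. This immediately justifies the side conditions $q=n$, $r=m$ in the statement, so it remains only to compute the type of $S/I_nJ_m$.

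Next I would apply the Auslander-Buchsbaum theorem (Theorem 1.3.3 of \cite{BH}), exactly as in the proof for $S/I_q$. Since $\dim S = n+m$ and, by Cohen-Macaulayness, $\depth S/I_nJ_m = \dim S/I_nJ_m = m+n-1$, we obtain
\[
\pd S/I_nJ_m = \dim S - \depth S/I_nJ_m = (n+m)-(m+n-1) = 1.
\]
Hence the minimal free resolution of $S/I_nJ_m$ has length $1$, and by the proposition recalled above the type equals the last Betti number, namely $\beta_1(S/I_nJ_m)$.

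Finally I would evaluate this last Betti number using the relation $\beta_i(I_qJ_r)=\beta_{i+1}(S/I_qJ_r)$ together with Theorem \ref{theo:bettiqr}. Setting $q=n$ and $r=m$ in the sum
\[
\beta_i(I_nJ_m)=\sum_{j+k=i}\binom{n}{n+j}\binom{m}{m+k}\binom{n+j-1}{j}\binom{m+k-1}{k},
\]
the factors $\binom{n}{n+j}$ and $\binom{m}{m+k}$ vanish for $j\geq 1$ and $k\geq 1$ respectively, so only the term $j=k=0$ survives, giving $\beta_0(I_nJ_m)=1$ and $\beta_i(I_nJ_m)=0$ for $i\geq 1$. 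Therefore $\beta_1(S/I_nJ_m)=\beta_0(I_nJ_m)=1$, which is the type.

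There is no real obstacle in this case; the only thing to watch is the collapse of the binomial sum, which reflects the elementary fact that $I_nJ_m=(x_1\cdots x_n\,y_1\cdots y_m)$ is principal, so that $S/I_nJ_m$ is a hypersurface and its resolution $0\to S(-(n+m))\to S\to S/I_nJ_m\to 0$ has last Betti number $1$ by inspection. I would mention this principal-ideal description as a sanity check but carry out the computation through Theorem \ref{theo:bettiqr} to keep the argument uniform with the rest of the section.
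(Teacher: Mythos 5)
Your proof is correct and reaches the same conclusion the paper does; the paper's own proof is just the one-line observation that Proposition \ref{CM}.(2) forces $q=n$, $r=m$, so that $I_nJ_m=(x_1\cdots x_n y_1\cdots y_m)$ is principal and the type is $1$. Your additional Auslander--Buchsbaum and Theorem \ref{theo:bettiqr} computation is a valid (if longer) route to the same fact, and you correctly identify the principal-ideal description as the underlying reason.
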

\begin{proof}
 By Corollary \ref{CM}.(2), we have that $S/I_qJ_r$ is CM if and only if $I_qJ_r$ is a principal ideal and the assertion follows.
\end{proof}

\begin{prop}
Let $S/(I_qJ_r+I_s)$ be a Cohen-Macaulay algebra then
\[\type S/(I_qJ_r+I_s)= \binom{n-1}{n-q} + \binom{n-1}{n-q-1}.\]
with $s=q+1$ and $r=m$.
\end{prop}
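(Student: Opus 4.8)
The plan is to combine the Auslander-Buchsbaum formula with the additive description of Betti numbers in Theorem \ref{theo:bettimix}. By Proposition \ref{CM}.(3) the Cohen-Macaulay hypothesis forces $s=q+1$ and $r=m$, so the ideal in question is $I_qJ_m+I_{q+1}$, and $\dim S/(I_qJ_m+I_{q+1})=m+q-1$. Since $\dim S=n+m$ and $S/(I_qJ_m+I_{q+1})$ is Cohen-Macaulay, Auslander-Buchsbaum gives
\[
\pd S/(I_qJ_m+I_{q+1})=(n+m)-(m+q-1)=n-q+1.
\]
Because the type equals the last Betti number and $\beta_i(I)=\beta_{i+1}(S/I)$, I want to compute $\type = \beta_{n-q+1}(S/(I_qJ_m+I_{q+1}))=\beta_{n-q}(I_qJ_m+I_{q+1})$.

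Next I would apply Theorem \ref{theo:bettimix}. Writing $I_{q+1}=I_{q+1}J_0$ fits its hypotheses with $t=0$, $s=q+1$, $r=m$ (note $n-q\geq 1$ since $q<s\leq n$), so
\[
\beta_{n-q}(I_qJ_m+I_{q+1})=\beta_{n-q}(I_qJ_m)+\beta_{n-q}(I_{q+1})+\beta_{n-q-1}(I_{q+1}J_m).
\]
The idea is then to evaluate the three summands separately: the first and third by Theorem \ref{theo:bettiqr} and the middle by Lemma \ref{th:bettiq}.

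The key simplification is that every product is taken against the top power $J_m$ of the $\mathbf{y}$-variables. In the sum of Theorem \ref{theo:bettiqr} applied to $I_{q'}J_m$ the factor $\binom{m}{m+k}$ vanishes unless $k=0$, so the double sum collapses to the single term $\beta_i(I_{q'}J_m)=\binom{n}{q'+i}\binom{q'+i-1}{i}$. Substituting $i=n-q$, $q'=q$ in the first term yields $\binom{n}{n}\binom{n-1}{n-q}=\binom{n-1}{n-q}$, and substituting $i=n-q-1$, $q'=q+1$ in the third yields $\binom{n}{n}\binom{n-1}{n-q-1}=\binom{n-1}{n-q-1}$. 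For the middle term Lemma \ref{th:bettiq} gives $\beta_{n-q}(I_{q+1})=\binom{n}{n+1}\binom{n}{n-q}=0$, since $\binom{n}{n+1}=0$. Summing the three contributions gives the asserted value $\binom{n-1}{n-q}+\binom{n-1}{n-q-1}$.

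The one point requiring genuine care is the bookkeeping of homological indices: tracking the shift $\beta_i(I)=\beta_{i+1}(S/I)$ together with the value $n-q+1$ coming from Auslander-Buchsbaum, and confirming that at this top homological degree exactly two of the three terms survive while $\beta_{n-q}(I_{q+1})$ drops out. Once the index is pinned down, the binomial evaluations are routine, the crucial mechanism being that the $J_m$-factor forces $k=0$ and thereby collapses the sum in Theorem \ref{theo:bettiqr}.
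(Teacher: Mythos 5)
Your proposal is correct and follows essentially the same route as the paper: Proposition \ref{CM}.(3) plus Auslander--Buchsbaum to get $\pd = n-q+1$, then Theorem \ref{theo:bettimix} to split the last Betti number into three terms, with the $\binom{m}{m+k}$ factor forcing $k=0$ in Theorem \ref{theo:bettiqr}. The only (immaterial) difference is that you kill the middle term by evaluating $\binom{n}{n+1}=0$ directly from Lemma \ref{th:bettiq}, whereas the paper notes $\pd I_{q+1}=n-q-1$; both are fine.
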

\begin{proof}
 By Proposition \ref{CM}.(3), we have that $S/(I_qJ_r+I_s)$ is CM if and only if $s=q+1$ and $r=m$ therefore by Theorem 3.2.(2) of \cite{IR} 
\[\dim S/(I_qJ_m+I_{q+1})=m+q-1.\]
By Auslander-Buchsbaum Theorem we obtain
\[\pd S/(I_qJ_m+I_{q+1})= \dim S-\dim S/(I_qJ_m+I_{q+1})= n-q+1.\] 
Applying Theorem \ref{theo:bettimix}, with $i=n-q$, $s=q+1$, $r=m$,  we have
\[\beta_{n-q+1}(S/(I_qJ_m+I_{q+1})) = \beta_{n-q} (I_qJ_m) + \beta_{n-q} (I_{q+1}) + \beta_{n-q-1} (I_{q+1}J_m).\]
Since the $\pd S/I_{q+1}=n-q$, we have that $\pd I_{q+1}=n-q-1$, that is $\beta_{n-q} (I_{q+1})=0$.

Therefore 
\[\beta_{n-q+1}(S/(I_qJ_m+I_{q+1})) = \beta_{n-q} (I_qJ_m) + \beta_{n-q-1} (I_{q+1}J_m),\]
and by Theorem \ref{theo:bettiqr}
\[\beta_{n-q} (I_qJ_m)= \sum_{j+k=n-q} \binom {n} {q+j} \binom {m} {m+k}\binom {q+j-1} {j}\binom {m+k-1} {k},\]
\[\beta_{n-q-1} (I_{q+1}J_m)= \sum_{j+k=n-q-1} \binom {n} {q+1+j} \binom {m} {m+k}\binom {q+j} {j}\binom {m+k-1} {k}.\]
It is easy to see that for $k>0$ the summands in $\beta_{n-q} (I_qJ_m)$ and in $\beta_{n-q-1} (I_{q+1}J_m)$ vanish.

If $k=0$ we obtain the assertion.
\end{proof}

\begin{prop}
Let $S/(I_qJ_r+I_sJ_t)$ be a Cohen-Macaulay algebra then
\[\type S/(I_qJ_r+I_sJ_t)= m+n-1,\]
with $q=n-1$, $r=m$, $s=n$, $t=m-1$.
\end{prop}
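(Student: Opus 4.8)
The plan is to follow exactly the pattern of the two preceding propositions: first pin down the projective dimension via Auslander--Buchsbaum, then read off the type as the last total Betti number, and finally evaluate it using Theorems \ref{theo:bettimix} and \ref{theo:bettiqr}. By Proposition \ref{CM}.(4) the algebra $S/(I_{n-1}J_m+I_nJ_{m-1})$ is Cohen--Macaulay with $\dim S/(I_{n-1}J_m+I_nJ_{m-1})=m+n-2$. Since $S$ has $m+n$ variables, the Auslander--Buchsbaum Theorem gives
\[
\pd S/(I_{n-1}J_m+I_nJ_{m-1})=\dim S-\depth S/(I_{n-1}J_m+I_nJ_{m-1})=(m+n)-(m+n-2)=2,
\]
so the type equals $\beta_2(S/(I_{n-1}J_m+I_nJ_{m-1}))$, which by the shift $\beta_i(I)=\beta_{i+1}(S/I)$ equals $\beta_1(I_{n-1}J_m+I_nJ_{m-1})$.

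Next I would apply Theorem \ref{theo:bettimix} with $i=1$ and the specialization $q=n-1$, $r=m$, $s=n$, $t=m-1$, so that $I_sJ_r=I_nJ_m$, obtaining
\[
\beta_1(I_{n-1}J_m+I_nJ_{m-1})=\beta_1(I_{n-1}J_m)+\beta_1(I_nJ_{m-1})+\beta_0(I_nJ_m).
\]
Each term on the right is then computed from the closed formula of Theorem \ref{theo:bettiqr}. The key observation, which makes everything collapse, is that several binomial factors in those sums vanish because an upper index is exceeded: in $\beta_1(I_{n-1}J_m)$ the summand with $k=1$ carries a factor $\binom{m}{m+1}=0$; in $\beta_1(I_nJ_{m-1})$ the summand with $j=1$ carries a factor $\binom{n}{n+1}=0$; and $\beta_0(I_nJ_m)$ consists of a single summand equal to $1$.

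Evaluating the surviving terms gives $\beta_1(I_{n-1}J_m)=n-1$ (from $(j,k)=(1,0)$), $\beta_1(I_nJ_{m-1})=m-1$ (from $(j,k)=(0,1)$), and $\beta_0(I_nJ_m)=1$, whence
\[
\type S/(I_{n-1}J_m+I_nJ_{m-1})=(n-1)+(m-1)+1=m+n-1,
\]
as claimed. The computation is entirely routine once these reductions are in place; the only point that genuinely needs care is bookkeeping the homological shift $\beta_i(I)=\beta_{i+1}(S/I)$ correctly and confirming that the two ``extra'' summands vanish identically rather than merely being small, which is precisely the mechanism isolated above.
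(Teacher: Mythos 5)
Your proposal is correct and follows essentially the same route as the paper: Proposition \ref{CM}.(4) plus Auslander--Buchsbaum to get $\pd=2$, then Theorem \ref{theo:bettimix} with $i=1$ and Theorem \ref{theo:bettiqr} to evaluate $\beta_1(I_{n-1}J_m)=n-1$, $\beta_1(I_nJ_{m-1})=m-1$, $\beta_0(I_nJ_m)=1$. The vanishing of the extra summands via $\binom{m}{m+1}=\binom{n}{n+1}=0$ is exactly the mechanism the paper uses.
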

\begin{proof}
 By Proposition \ref{CM}.(4), we have that $S/(I_qJ_r+I_sJ_t)$ is CM if and only if $q=n-1$, $r=m$, $s=n$, $t=m-1$, therefore by Theorem 3.2.(2) \cite{IR} 
\[\dim S/(I_{n-1}J_m+I_nJ_{m-1})=m+n-2.\]
By Auslander-Buchsbaum Theorem we obtain
\[\pd S/(I_{n-1}J_m+I_nJ_{m-1})= \dim S-\dim S/(I_{n-1}J_m+I_nJ_{m-1})= 2.\] 

Applying Theorem \ref{theo:bettimix}, with $i=1$, $q=n-1$, $r=m$, $s=n$, $t=m-1$,  we obtain
\[\beta_{2}(S/(I_{n-1}J_m+I_nJ_{m-1})) = \beta_1 (I_{n-1}J_m) + \beta_1 (I_n J_{m-1}) + \beta_0 (I_nJ_m).\]

We observe that $I_nJ_m$ is a principal ideal therefore $\beta_0 (I_nJ_m)=1$.

By Theorem \ref{theo:bettiqr}
\[\beta_1 (I_{n-1}J_m)= \sum_{j+k=1} \binom {n} {n-1+j} \binom {m} {m+k}\binom {n-1+j-1} {j}\binom {m+k-1} {k},\]

\[\beta_1 (I_nJ_{m-1})= \sum_{j+k=1} \binom {n} {n+j} \binom {m} {m-1+k}\binom {n+j-1} {j}\binom {m-1+k-1} {k}.\]
It is easy to see that for $k>0$ the summands in $\beta_{1} (I_{n-1}J_m)$  vanish. 
For $j>0$ the same happens to  $\beta_1 (I_n J_{m-1})$.

Therefore we obtain
\[\beta_{2}(S/(I_{n-1}J_m+I_nJ_{m-1})) = n-1 + m-1 + 1.\]
\end{proof}

\begin{rem}
 We observe that the mixed product ideal that are Gorenstein $($e.g. CM ring of type 1$)$, are:
\begin{enumerate}
 \item $K[x_1,\ldots,x_n]/I_n$;
 \item $K[x_1,\ldots,x_n]/(x_1,\ldots,x_n)$;
 \item $K[x_1,\ldots,x_n,y_1,\ldots,y_m]/I_nJ_m$.
\end{enumerate}
\end{rem}

\end{document}